\newtheorem{theorem}{Theorem}[section]
\newtheorem{lemma}[theorem]{Lemma}
\newtheorem{corollary}[theorem]{Corollary}
\newtheorem{proposition}[theorem]{Proposition}
\theoremstyle{definition}
\theoremstyle{remark}
\numberwithin{equation}{section}
\begin{document}

\title[Cyclic branched covers of alternating knots]{Cyclic branched covers
of alternating knots and $L$-spaces}

\author{Masakazu Teragaito}
\address{Department of Mathematics and Mathematics Education, Hiroshima University,
1-1-1 Kagamiyama, Higashi-hiroshima, Japan 739-8524.}
\email{teragai@hiroshima-u.ac.jp}
\thanks{The author was partially supported by Japan Society for the Promotion of Science,
Grant-in-Aid for Scientific Research (C), 25400093.
}%

\subjclass[2010]{Primary 57M25}



\keywords{branched cover, alternating knot, pretzel knot, $L$-space}

\begin{abstract}
For any alternating knot, it is known that the double branched cover of the $3$-sphere
branched over the knot is an $L$-space.
We show that
the three-fold cyclic branched cover is also an $L$-space
for any genus one alternating knot.
\end{abstract}

\maketitle

\section{Introduction}\label{sec:intro}

An \textit{$L$-space} $M$ is a rational homology $3$-sphere whose Heegaard Floer homology
$\widehat{HF}(M)$ is a free abelian group of rank equal to $|H_1(M;\mathbb{Z})|$ (\cite{OS1}).
The most typical examples of $L$-spaces are lens spaces.
In recent years, it is recognized that $L$-spaces form an important class of $3$-manifolds.
For example, see \cite{BGW,OS1}.

We consider the problem when cyclic branched covers of the $3$-sphere
branched over a knot or link is an $L$-space.
Toward this direction, Ozsv\'{a}th and Szab\'{o} \cite{OS2} first showed that
the double branched cover of any non-split alternating link (more generally, quasi-alternating link) is an $L$-space.
Peters \cite{P} verified that for a genus one, $2$-bridge knot $C[2m,2n]\ (m,n>0)$ in Conway's notation,
the $d$-fold cyclic branched cover is an $L$-space for any $d\ge 2$, and that
for $C[2m,-2n]\ (m,n>0)$, so is the $3$-fold cyclic branched cover.
For the latter, the same conclusion still holds for the cases $d=4$ (\cite{T})
and $d=5$ (\cite{H}), but it would be false for sufficiently large $d$ (\cite{Hu,T}). 

In this paper, we restrict ourselves to alternating knots. 
As mentioned above, the double branched cover of any alternating knot is an $L$-space.
Then, is the $3$-fold cyclic branched cover an $L$-space?
The answer is positive for genus one, $2$-bridge knots.
However, it is negative, in general.
Let $\Sigma_d(K)$ denote the $d$-fold cyclic branched cover of the $3$-sphere branched over a knot $K$.
By Baldwin \cite{B}, if $K$ is the trefoil, then $\Sigma_d(K)$ is an $L$-space
if and only if $d\le 5$.
This implies that if $K$ is a $(2,m)$-torus knot with $m\ge 7$,
then $\Sigma_3(K)$ is not an $L$-space.
For, $\Sigma_3(K)$ is homeomorphic to the $m$-fold cyclic branched cover of the trefoil.
These torus knots are alternating, but have genus greater than one.
Thus, we will examine the case where alternating knots have genus one.

\begin{theorem}\label{thm:main}
Let $K$ be a $3$-strand pretzel knot $P(2a+1,2b+1,2c+1)$, where $a,b,c>0$.
Then $\Sigma_3(K)$ is an $L$-space.
\end{theorem}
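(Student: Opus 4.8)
The plan is to reduce $\Sigma_3(K)$ to an explicit Dehn surgery and then argue inductively. Write $K=P(2a+1,2b+1,2c+1)$ and let $U_1,U_2,U_3\subset S^3$ be three disjoint unknots, with $U_i$ encircling the two strands of the $i$-th pretzel tangle. These two strands run antiparallel through the tangle, so $\mathrm{lk}(U_i,K)=0$, and $K$ is obtained from the trefoil $P(1,1,1)$ by $1/a$-, $1/b$-, $1/c$-surgery (for a suitable sign convention) on $U_1,U_2,U_3$. Because $\mathrm{lk}(U_i,K)=0$, each $U_i$ lifts to three disjoint curves in the $3$-fold branched cover, so $\Sigma_3(K)$ is obtained from $\Sigma_3(P(1,1,1))=\Sigma(2,3,3)$ — a spherical space form, hence an $L$-space — by surgery on nine curves: the three lifts of $U_1$ with coefficient $1/a$, those of $U_2$ with $1/b$, and those of $U_3$ with $1/c$. (Equivalently, using the genus-one Seifert surface $F$ of $K$ with Seifert matrix $V$, one can present $\Sigma_3(K)$ as surgery on a four-component ``necklace'' link built from the lifts of a symplectic basis of $H_1(F)$, with framings and linking numbers read off from $V+V^{T}$ and $V$.) As a sanity check I would compute $H_1$: from $V$ one finds $\Delta_K(t)=\tfrac14\big[(\delta+1)(1+t^{2})-2(\delta-1)t\big]$ with $\delta=pq+qr+rp=\det K$ (here $p=2a+1$, etc.), hence $\Delta_K(\omega)=\tfrac14\,\omega(1-3\delta)$ and $|H_1(\Sigma_3(K))|=\big(\tfrac{3\delta-1}{4}\big)^{2}$; since $\delta\equiv3\pmod 4$ this is the square of an integer, as it must be for an odd-fold branched cover.

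For the $L$-space property I would induct on $\delta$ (equivalently on $a+b+c$). When $\min(a,b,c)=0$ the knot $K$ is a genus-one $2$-bridge knot, so $\Sigma_3(K)$ is an $L$-space by Peters \cite{P}; this is the base case. For the inductive step, the passage from $P(2a+1,2b+1,2c+1)$ to $P(2a-1,2b+1,2c+1)$ is realized by $\pm1$-surgery on $U_1$, which lifts to surgery on the three corresponding curves in $\Sigma_3(K)$. Feeding this into the link surgery formula — equivalently, iterating the surgery exact triangle for $\widehat{HF}$ — the all-trivial corner is $\Sigma_3(P(2a+1,\dots))$, the all-nontrivial corner is $\Sigma_3(P(2a-1,\dots))$, and the remaining corners are $3$-fold covers of Dehn surgeries on $K$ along the intermediate slopes; I would identify these and show, by a parallel induction, that they too are $L$-spaces, and then verify that the ranks add so that the mapping cone exhibits no cancellation, which forces $\Sigma_3(K)$ to be an $L$-space. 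One should note that a direct structural identification is not available: $P(p,q,r)$ is hyperbolic for generic parameters, so $\Sigma_3(K)$ is generically hyperbolic, and the argument has to be Heegaard Floer-theoretic rather than a classification of the $3$-manifold. (If, in the non-hyperbolic cases, the surgery description simplifies to a graph manifold, one may instead apply the gluing criterion for $L$-space graph manifolds.)

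The main obstacle is the rank bookkeeping in this iterated mapping cone: one must rule out cancellation, which typically requires a $\mathrm{Spin}^c$- or parity-refined count, or a computation of correction terms, and one must treat the three lifts of $U_1$ symmetrically — they are permuted by a deck transformation — so that the intermediate corners are correctly recognized and shown to be $L$-spaces. I expect most of the work to lie in manipulating the surgery description into a form where this count closes cleanly, and in dispatching the base-case reductions.
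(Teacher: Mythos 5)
Your surgery presentation of $\Sigma_3(K)$ and the $H_1$ count are fine (your $\bigl(\tfrac{3\delta-1}{4}\bigr)^2$ agrees with the value $(3ab+3bc+3ca+3a+3b+3c+2)^2$ computed in the paper), but the inductive step is where the whole theorem lives, and it is not carried out. Replacing the $(2a+1)$-tangle by a $(2a-1)$-tangle lifts to $\pm1$-surgery on the \emph{three} lifts of $U_1$, so the iterated exact triangle has $2^3$ corners, six of which involve $0$-framed surgery on a proper, nonempty subset of the three lifts. Those corners are misidentified in your sketch: surgering only one or two of the three lifts destroys the $\mathbb{Z}_3$ deck symmetry, so the resulting manifolds are \emph{not} ``$3$-fold covers of Dehn surgeries on $K$'' and there is no knot in $S^3$ whose branched cover they are in any evident way; moreover the $0$-framed corners must be shown to be rational homology spheres at all before one can ask whether they are $L$-spaces, and the determinant additivity $|H_1(Y)|=|H_1(Y_0)|+|H_1(Y_{-1})|$ must be verified at every stage. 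You explicitly defer all of this (``identify these and show\dots that they too are $L$-spaces\dots verify that the ranks add''), which is precisely the content of the proof, so as written there is a genuine gap rather than a proof.

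For contrast, the paper's device is designed exactly to make this bookkeeping tractable: $K$ has cyclic period $2$, and exchanging the roles of the knot and the axis in the quotient link $k\cup A$ identifies $\Sigma_3(K)$ with $\Sigma_2(\mathcal{L})$ for an explicit link $\mathcal{L}\subset S^3=\Sigma_3(k)$. The iterated triangle then becomes the statement that $\mathcal{L}$ is quasi-alternating, proved by resolving three specific crossings, recognizing the resolved links as alternating pretzel links or as the link for the parameter $a-1$ (giving an induction on $a$ with base case $a=1$), and checking the determinant identities by Goeritz matrices. If you want to salvage your approach, the essential missing ingredient is a way to keep all intermediate corners inside a class of manifolds you can recognize and whose $H_1$ you can control; transferring to the double branched cover of a link in $S^3$, where corners are resolutions and $|H_1|$ is a link determinant, is the paper's answer to that problem.
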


This immediately implies the following.

\begin{corollary}
Let $K$ be a genus one, alternating knot.
Then $\Sigma_3(K)$ is an $L$-space.
\end{corollary}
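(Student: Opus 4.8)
The plan is to realise $\Sigma_3(K)$ by an explicit Dehn surgery and then argue by induction on the pretzel parameters, using the Heegaard Floer surgery exact triangle; the advantage of such an argument is that it is insensitive to the geometry of $\Sigma_3(K)$, which is typically hyperbolic once $a,b,c$ are large.

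First I would write $K=P(2a+1,2b+1,2c+1)$ as the result of inserting full twists into the three twist regions of the trefoil $P(1,1,1)$: the $i$-th twist region is encircled by an unknotted axis $C_i$, and inserting $n$ full twists there is $\pm 1/n$ surgery on $C_i$. Since the orientable pretzel surface realises each twist region as a twisted band, the two strands running through it are anti-parallel, so each $C_i$ is null-homologous in the knot exterior and therefore lifts to three disjoint curves $\widetilde C_i^{(0)},\widetilde C_i^{(1)},\widetilde C_i^{(2)}$ in $\Sigma_3$, cyclically permuted by the deck transformation. As $\Sigma_3(P(1,1,1))=\Sigma(2,3,3)$, this exhibits $\Sigma_3(K)$ as surgery on the nine-component link $\bigcup_{i,j}\widetilde C_i^{(j)}$ in $\Sigma(2,3,3)$, with framings read off from $a,b,c$; I would first make this link, its pairwise linking numbers, and its framings completely explicit.

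Then I would induct on $a+b+c$. Decreasing a parameter, say $2c+1\to 2c-1$, is realised upstairs by surgery on the three curves $\widetilde C_3^{(0)},\widetilde C_3^{(1)},\widetilde C_3^{(2)}$; breaking this into single Dehn surgeries and feeding each into the surgery exact triangle gives triangles
$$\widehat{HF}(Y_\infty)\longrightarrow\widehat{HF}(Y_1)\longrightarrow\widehat{HF}(Y_0)\longrightarrow\widehat{HF}(Y_\infty),$$
and analogous ones for the intermediate stages. The third term $Y_0$ comes from the oriented resolution of a twist region---which turns $P(2a+1,2b+1,2c+1)$ into a simpler Montesinos link (a two-bridge link, or a connected sum of torus knots), possibly with $S^1\times S^2$ summands appearing upstairs---so it is, up to such summands, a three-fold branched cover of a simpler link. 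Wherever all three terms are $L$-spaces and $|H_1|$ is additive along the triangle, the ``two out of three'' principle (using $\operatorname{rk}\widehat{HF}\ge|H_1|$ together with the rank inequality from the triangle) deduces $L$-spaceness of one term from the other two; the $|H_1|$ additivity reduces to an elementary computation with the quadratic Alexander polynomial of $P(2a+1,2b+1,2c+1)$, its values at cube roots of unity, and the homology of the auxiliary covers. The base of the induction consists of pretzel knots with a parameter equal to $1$: these are double-twist knots, whose three-fold branched covers are $L$-spaces by Peters \cite{P}, together with $\Sigma_3(P(1,1,1))=\Sigma(2,3,3)$, an $L$-space by Baldwin \cite{B}.

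I expect the main obstacle to be that the induction does not obviously close. Each axis lifts to three curves rather than one, so a single step costs several applications of the exact triangle and introduces intermediate manifolds that are not themselves branched covers of knots in $S^3$; and some of the manifolds one meets naturally among the third terms are \emph{not} $L$-spaces---for example further resolutions produce $\Sigma_3(T(2,2a+1))=\Sigma(2,3,2a+1)$, which fails to be an $L$-space once $2a+1\ge 7$ by Baldwin \cite{B}---so the ``two out of three'' step cannot be applied blindly. Making the argument work requires choosing the sequence of surgeries and the order of the induction so as to stay within a controlled family of $L$-spaces, or replacing ``two out of three'' by a finer count of $\widehat{HF}$-ranks that tolerates the non-$L$-space terms, all while keeping the homological bookkeeping consistent along the entire tree of exact triangles. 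This is where the real work lies.
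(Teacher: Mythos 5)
Your proposal contains a genuine gap, and you have in fact flagged it yourself: the inductive scheme via the surgery exact triangle ``does not obviously close,'' and you offer no mechanism for closing it. The ``two out of three'' principle requires, at every vertex of the tree of triangles, that the other two terms be $L$-spaces and that $|H_1|$ be additive. But, as you observe, each axis $C_i$ lifts to \emph{three} curves in $\Sigma_3$, so a single change of a pretzel parameter costs several applications of the triangle whose intermediate terms are not cyclic branched covers of links in $S^3$ and have no a priori reason to be $L$-spaces; worse, some of the natural third terms (e.g.\ $\Sigma(2,3,2a+1)$ for $2a+1\ge 7$) are provably \emph{not} $L$-spaces. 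Asserting that ``the real work lies'' in choosing a sequence of surgeries that avoids these manifolds is not a proof that such a sequence exists --- exhibiting one is precisely the content of the theorem, and nothing in your sketch guarantees it. A secondary omission: you never reduce a general genus one alternating knot to the $2$-bridge or odd pretzel form; this step needs the classification of Boyer--Zhang (or Patton) together with Gabai's computation forcing the pretzel parameters to be odd and of one sign.

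The paper closes exactly this gap by a change of viewpoint absent from your plan: $K=P(2a+1,2b+1,2c+1)$ has cyclic period $2$, and the $\mathbb{Z}_3\oplus\mathbb{Z}_2$ branched cover of the quotient link $k\cup A$ identifies $\Sigma_3(K)$ with the \emph{double} branched cover of an explicit link $\mathcal{L}\subset S^3$ (the lift of the axis $A$ to $\Sigma_3(k)\cong S^3$). Your ``two out of three plus homology additivity'' principle then becomes the combinatorial statement that $\mathcal{L}$ is quasi-alternating: one resolves explicit crossings, identifies each resulting link as an alternating pretzel link or as the link $\mathcal{L}$ for parameter $a-1$, and checks determinant additivity by Goeritz-matrix computations. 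In the double-cover setting every crossing resolution is a single exact triangle and every third term is again a branched cover of a link in $S^3$, which is what makes the induction actually close. To salvage your approach you would essentially have to rediscover this reduction: the equivariant surgery description upstairs is the quasi-alternating argument in disguise, but only after the order-two symmetry has been used to halve the covering degree.
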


\begin{proof}
Suppose that $K$ is a genus one, alternating knot.
By \cite[Lemma 3.1]{BZ} (see also \cite{Pa}), $K$ is either a $2$-bridge knot or
a $3$-strand pretzel knot $P(\ell,m,n)$ where $\ell, m, n$ have the same sign.
For a genus one, $2$-bridge knot, Peters \cite{P} shows that
$\Sigma_3(K)$ is an $L$-space.
If $K=P(\ell,m,n)$, then
$\ell,m,n$ are odd by \cite{Ga}.
Thus Theorem \ref{thm:main} gives the conclusion.
\end{proof}

Hence, the rest of paper is devoted to prove Theorem \ref{thm:main}. 
In Section \ref{sec:block},
we describe a link $\mathcal{L}$ whose double branched cover is homeomorphic to
$\Sigma_3(K)$  for $K=P(2a+1,2b+1,2c+1)$.
Then Theorem \ref{thm:main} immediately follows from Theorem \ref{thm:qa}, which claims that the link
$\mathcal{L}$ is quasi-alternating.
Section \ref{sec:det} describes how to calculate determinants of links through Goeritz matrices.
In Section \ref{sec:a1}, we first argue the case where $a=1$.
Section \ref{sec:induction} completes the proof of Theorem \ref{thm:qa} by using
an inductive argument.
The last section contains some remarks.

\section{Quasi-alternating links}\label{sec:block}

Let $K$ be a pretzel knot $P(2a+1,2b+1,2c+1)$ with $a,b,c>0$,
as illustrated in Figure \ref{fig:pretzel}.
Here, each rectangular box consists of vertically right-handed half-twists of indicated number.
This knot has cyclic period two such that its axis is drawn as the horizontal line.
By taking the quotient of this action,
the images of $K$ and the axis give a link $k\cup A$ in Figure \ref{fig:link}.
The central two boxes consist of vertical twists, and the right box consists of
horizontal twists.
Note that each component of this link is unknotted.
Moreover, it is easy to see that two components are interchangeable.

\begin{figure}[tb]
\includegraphics*[scale=0.6]{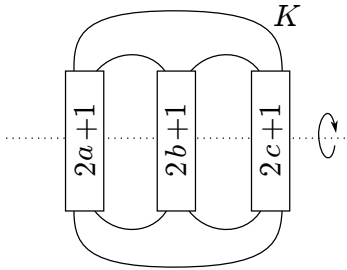}
\caption{A pretzel knot $K=P(2a+1,2b+1,2c+1)$}\label{fig:pretzel}
\end{figure}

\begin{figure}[tb]
\includegraphics*[scale=0.4]{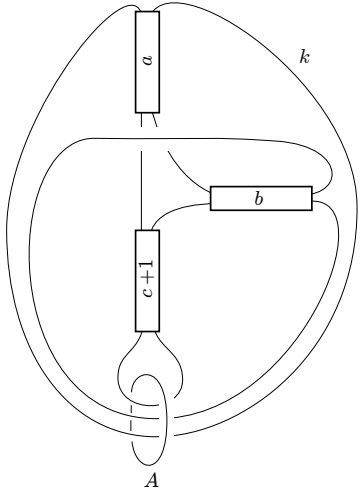}
\caption{The link $k\cup A$}\label{fig:link}
\end{figure}

\begin{proposition}\label{prop:main}
Let $\mathcal{L}$ be the link obtained as the lift of $A$ in $\Sigma_3(k)$, which is the $3$-sphere.
Then $\Sigma_2(\mathcal{L})$ is homeomorphic to $\Sigma_3(K)$.
\end{proposition}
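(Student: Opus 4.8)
The plan is to exhibit one closed $3$-manifold that is simultaneously $\Sigma_2(\mathcal{L})$ and $\Sigma_3(K)$, realizing both as a cover of the orbifold $\mathcal{O}$ whose underlying space is $S^3$, whose singular locus is $k\cup A$, with cone order $3$ along $k$ and cone order $2$ along $A$ (recall $k$ and $A$ are disjoint unknots). Since $H_1(S^3\setminus(k\cup A))=\mathbb{Z}\mu_k\oplus\mathbb{Z}\mu_A$, there is a well-defined homomorphism $\theta\colon\pi_1^{\mathrm{orb}}(\mathcal{O})\to\mathbb{Z}/3\oplus\mathbb{Z}/2$ sending $\mu_k\mapsto(1,0)$ and $\mu_A\mapsto(0,1)$; it kills the orbifold relations $\mu_k^3$ and $\mu_A^2$. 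Let $\widehat{M}\to\mathcal{O}$ be the orbifold cover corresponding to $\ker\theta$. Since $\theta$ carries the meridian of $k$ to an element of order $3$ and the meridian of $A$ to an element of order $2$, the group $\ker\theta$ meets both meridian subgroups trivially, so $\widehat{M}$ has trivial local groups everywhere; hence $\widehat{M}$ is a closed $3$-manifold and $\widehat{M}\to S^3$ is a degree-$6$ branched cover, of branching index $3$ along $k$ and $2$ along $A$.

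First I would factor this cover through the intermediate cover associated with the index-$3$ subgroup $H_1:=\theta^{-1}(0\oplus\mathbb{Z}/2)$. As $H_1$ meets $\langle\mu_k\rangle$ trivially and contains $\mu_A$, this intermediate orbifold has underlying space the $3$-fold cyclic branched cover of $S^3$ along the unknot $k$ --- namely $S^3$ --- carrying a cone locus of order $2$ along the lift $\mathcal{L}$ of $A$; write it $(S^3;\mathcal{L}^{(2)})$. The residual cover $\widehat{M}\to(S^3;\mathcal{L}^{(2)})$ corresponds to $\ker\theta\le H_1$, has index $2$, is a genuine (unbranched) covering away from $\mathcal{L}$ (where $(S^3;\mathcal{L}^{(2)})$ is already a manifold), and has nontrivial $\mathbb{Z}/2$ monodromy around $\mathcal{L}$, since $\theta(\mu_{\mathcal{L}})=\theta(\mu_A)=(0,1)$ generates $H_1/\ker\theta$. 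Therefore $\widehat{M}=\Sigma_2(\mathcal{L})$.

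Next I would factor the same degree-$6$ cover through the intermediate cover associated with the index-$2$ subgroup $H_1':=\theta^{-1}(\mathbb{Z}/3\oplus 0)$. As $H_1'$ meets $\langle\mu_A\rangle$ trivially and contains $\mu_k$, this intermediate orbifold has underlying space the double branched cover of $S^3$ along $A$, with a cone locus of order $3$ along the lift of $k$. But $A$ is exactly the axis of the period-$2$ symmetry used to obtain $k\cup A$ from $(S^3,K)$, so this double branched cover is $(S^3,K)$ itself, the lift of $k$ being $K$; write the orbifold $(S^3;K^{(3)})$. The residual cover $\widehat{M}\to(S^3;K^{(3)})$ corresponds to $\ker\theta\le H_1'$, has index $3$, is a genuine covering away from $K$, and around $K$ has $\mathbb{Z}/3$ monodromy $\theta(\mu_k)=(1,0)$, a generator of $H_1'/\ker\theta$; hence it is the $3$-fold cyclic branched cover, $\widehat{M}=\Sigma_3(K)$. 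Combining the two identifications, $\Sigma_2(\mathcal{L})\cong\widehat{M}\cong\Sigma_3(K)$.

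There is no deep obstacle here; the step needing care is the bookkeeping when splitting the $\mathbb{Z}/6$-cover into its two stages --- one must confirm that each residual cover carries the \emph{full} cyclic monodromy around the relevant branch component, so that it really is the double, respectively $3$-fold cyclic, branched cover and not some other abelian cover, and one must identify the double branched cover of $S^3$ along $A$ with the original periodic pair $(S^3,K)$, which is built into the definition of $k\cup A$ in this section.
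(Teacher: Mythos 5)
Your proposal is correct and is essentially the paper's own argument: the paper simply takes the $\mathbb{Z}_3\oplus\mathbb{Z}_2$ branched cover determined by $\mu_k\mapsto(1,0)$, $\mu_A\mapsto(0,1)$ and asserts it is both $\Sigma_2(\mathcal{L})$ and $\Sigma_3(K)$. You have merely filled in the two intermediate-cover factorizations and the monodromy bookkeeping that the paper leaves implicit.
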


\begin{proof}
Let $M$ be the $\mathbb{Z}_3\oplus \mathbb{Z}_2$ branched cover of $k\cup A$, corresponding
to the map $H_1(S^3-k\cup A)\to \mathbb{Z}_3\oplus \mathbb{Z}_2$ sending
positively oriented meridians of $k$ and $A$ to $(1,0)$ and $(0,1)$, respectively.
Then $M$ is homeomorphic to $\Sigma_2(\mathcal{L})$ and $\Sigma_3(K)$.
\end{proof}

After exchanging the position of $k$ and $A$ in Figure \ref{fig:link},
we still have the same diagram.
Consider $\Sigma_3(k)$.
Then the link $\mathcal{L}$ is as illustrated in Figure \ref{fig:block}.

\begin{figure}[tb]
\includegraphics*[scale=0.4]{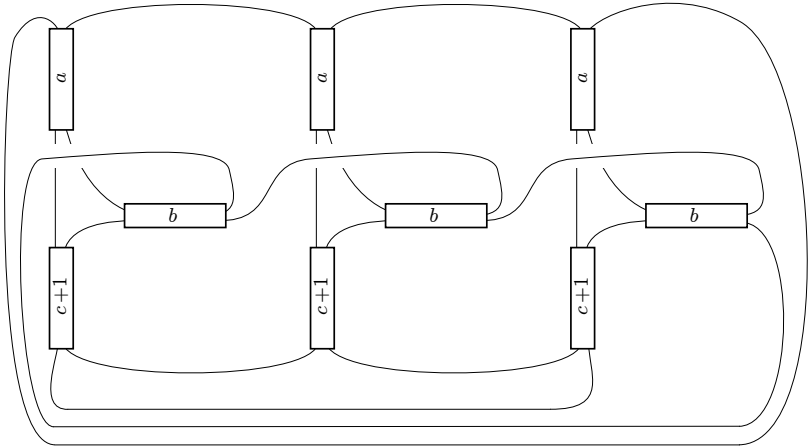}
\caption{The link $\mathcal{L}$}\label{fig:block}
\end{figure}

We recall that the notion of quasi-alternating links \cite{OS2}.
The set of \textit{quasi-alternating links\/} QA is the smallest set of links
satisfying the following.
\begin{itemize}
\item The trivial knot belongs to QA.
\item If a link $L$ has a digram with crossing $c$ such that both of 
two links $L_\infty$ and $L_0$ obtained by smoothing $c$ as in Figure \ref{fig:resolution}
belong to QA, and $\det L=\det L_\infty +\det L_0$, then $L$ belongs to QA.
\end{itemize}

\begin{figure}[tb]
\includegraphics*[scale=0.6]{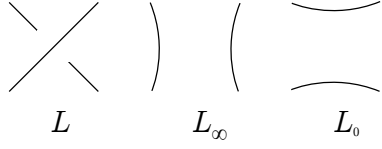}
\caption{Resolutions}\label{fig:resolution}
\end{figure}

As noted in Section \ref{sec:intro}, 
the double branched cover of a quasi-alternating link is an $L$-space, and
any non-split alternating link is quasi-alternating (see \cite{OS2}).

\begin{theorem}\label{thm:qa}
The link $\mathcal{L}$ is quasi-alternating.
Hence, $\Sigma_2(\mathcal{L})$ is an $L$-space.
\end{theorem}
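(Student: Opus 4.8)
The plan is to show directly that $\mathcal{L}$ lies in the set QA by repeatedly applying the recursive defining condition, peeling off crossings one at a time from the ``block'' picture in Figure \ref{fig:block} and tracking determinants. The strategy separates naturally into two parts. First, I would handle a base case, say $a=1$, where the diagram of $\mathcal{L}$ simplifies enough that one can exhibit an explicit crossing $c$ whose two smoothings $\mathcal{L}_0$ and $\mathcal{L}_\infty$ are either already known to be quasi-alternating (e.g.\ non-split alternating links, or previously-treated pretzel-type links) and for which the additivity $\det\mathcal{L}=\det\mathcal{L}_0+\det\mathcal{L}_\infty$ holds. Second, I would set up an induction on the remaining parameters $b$ and $c$ (and on $a$), smoothing a crossing in one of the twist regions so that $\mathcal{L}_0$ and $\mathcal{L}_\infty$ are links of the same type but with smaller parameters, hence quasi-alternating by the inductive hypothesis. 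This mirrors exactly the division of labor announced in the introduction between Section \ref{sec:a1} and Section \ref{sec:induction}.

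The computational engine throughout is the determinant, and here I would lean on the Goeritz-matrix description promised in Section \ref{sec:det}: choosing a checkerboard coloring of the relevant diagram of $\mathcal{L}$, one writes down the Goeritz matrix, and $|\det G|$ computes $\det\mathcal{L}$ up to sign. Because smoothing a crossing in a twist region changes the Goeritz matrix in a controlled way (changing one diagonal entry, or deleting a row and column), the additivity relation $\det\mathcal{L}=\det\mathcal{L}_0+\det\mathcal{L}_\infty$ should reduce to a linear-algebra identity — typically a cofactor expansion — among three closely related matrices. I would therefore first derive a clean closed formula, or at least a manageable recursion, for $\det\mathcal{L}=\det\mathcal{L}(a,b,c)$ as a function of the three parameters, and check that it is strictly positive (so the knot/link is never one with vanishing determinant, which would block quasi-alternateness). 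Verifying the strict additivity at each smoothing step — i.e.\ that no ``determinant drop'' occurs — is the crux: it amounts to checking an inequality like $\det\mathcal{L}_0+\det\mathcal{L}_\infty \le \det\mathcal{L}$ cannot fail, combined with the reverse inequality that always holds for the right choice of crossing.

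The main obstacle I anticipate is precisely this bookkeeping of determinants across the induction: one must choose, at each stage, the crossing to smooth so that \emph{both} resulting links fall strictly inside the scope of the inductive hypothesis (same family, smaller complexity) \emph{and} the determinant splits additively. For pretzel-type diagrams this can be delicate when a twist region shrinks to a single crossing or disappears, since the link type can degenerate (a twist region of length one, a component becoming split, or the diagram ceasing to be reduced), and such boundary configurations have to be absorbed either into the $a=1$ base case or handled as separate small lemmas. Ensuring the determinant formula remains valid and positive across all these degenerate specializations, and that the additivity identity survives them, is where the real work lies; once a uniform formula for $\det\mathcal{L}(a,b,c)$ is in hand and the smoothings are chosen compatibly, the inductive step itself is a short cofactor computation. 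Having established that $\mathcal{L}\in\mathrm{QA}$, the second sentence of the theorem is immediate: by \cite{OS2} the double branched cover of a quasi-alternating link is an $L$-space, so $\Sigma_2(\mathcal{L})$ is an $L$-space, and by Proposition \ref{prop:main} this is $\Sigma_3(K)$, giving Theorem \ref{thm:main}.
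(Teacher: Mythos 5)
Your outline reproduces the paper's architecture---a base case at $a=1$, an induction, and Goeritz-matrix bookkeeping of determinants with additivity checks---but it has a genuine gap: you never pin down a resolution scheme under which the induction actually closes, and the scheme you gesture at (inducting on $b$ and $c$ as well as on $a$, smoothing crossings in the twist regions to shrink those parameters) is not the one that works. In the paper the only crossings ever resolved are the three crossings $c_1,c_2,c_3$ at the tops of the three vertical $a$-twist columns; the parameters $b$ and $c$ are never touched. The branches that would appear to force a descent in $b$ or $c$ in fact terminate immediately, because resolving two of the $c_i$ suitably yields the alternating pretzel link $P(b+c+1,b+c+1,b+c+1)$ (Lemma \ref{lem:diagram}(1)), while resolving all three by $\infty$ yields $L(a-1\colon *,*,*)$, which is where the descent in $a$ occurs.

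More importantly, the naive inductive statement ``$\mathcal{L}(a)$ is quasi-alternating'' does not close on itself: the resolution tree for $L(a\colon *,*,*)$ produces $L(a\colon 0,\infty,\infty)=L(a-1\colon 0,*,*)$ and $L(a\colon \infty,\infty,0)=L(a-1\colon 0,*,*)$, i.e.\ \emph{partially resolved} links at level $a-1$ rather than $\mathcal{L}(a-1)$ itself. The paper therefore proves the strengthened claim that both $L(a\colon *,*,*)$ and $L(a\colon 0,*,*)$ are quasi-alternating, and the base case (Theorem \ref{thm:qa-a1}) correspondingly establishes quasi-alternateness of $L(0,*,*)$ and $L(\infty,*,*)$, bottoming out not at an alternating link but at Peters' result that $L(\infty,\infty,\infty)$ is quasi-alternating. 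Your remark about verifying additivity via an inequality that ``cannot fail'' is also off target: there is no a priori inequality between $\det L$ and $\det L_0+\det L_\infty$, and the paper simply computes all determinants in closed form (Lemma \ref{lem:cal}, Tables \ref{table:det} and \ref{table:ind}) and checks the required equalities directly. The final step (quasi-alternating implies that the double branched cover is an $L$-space, by \cite{OS2}) is correct as you state it.
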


The proof of this theorem is split into Sections \ref{sec:a1} and \ref{sec:induction}.

\begin{proof}[Proof of Theorem \ref{thm:main}]
By Proposition \ref{prop:main}, $\Sigma_3(K)$ is homeomorphic to $\Sigma_2(\mathcal{L})$,
which is an $L$-space by Theorem \ref{thm:qa}.
\end{proof}

\section{Determinant}\label{sec:det}

To show that the link $\mathcal{L}$ is quasi-alternating,
it is necessary to calculate the determinant of $\mathcal{L}$ and
those of various links arisen from $\mathcal{L}$ by resolutions.
These calculations are done through Goeritz matrices (see \cite{BuZ}).

First, consider the checkerboard coloring of the diagram of $\mathcal{L}$ as in Figure \ref{fig:block}.
The unbounded region is white, and this region will be ignored.
The vertical $a$ right-handed half-twists at the upper left yield
the white regions $\alpha_1,\alpha_4,\dots,\alpha_{3a-2}$ numbered from the top.
Similarly, the white regions $\alpha_2,\alpha_5,\dots,\alpha_{3a-1}$ 
and $\alpha_3,\alpha_6,\dots,\alpha_{3a}$ appear at the upper center and the upper right.
The three white regions just above horizontal $b$ twists are numbered $\alpha_{3a+1},\alpha_{3a+2},\alpha_{3a+3}$ from the left.
Finally, the  white regions $\alpha_{3a+4},\alpha_{3a+5},\alpha_{3a+6}$
are located on the left side of lower twists from the left.
Figure \ref{fig:white} exhibits this numbering convention when $a=1$.

\begin{figure}[tb]
\includegraphics*[scale=0.4]{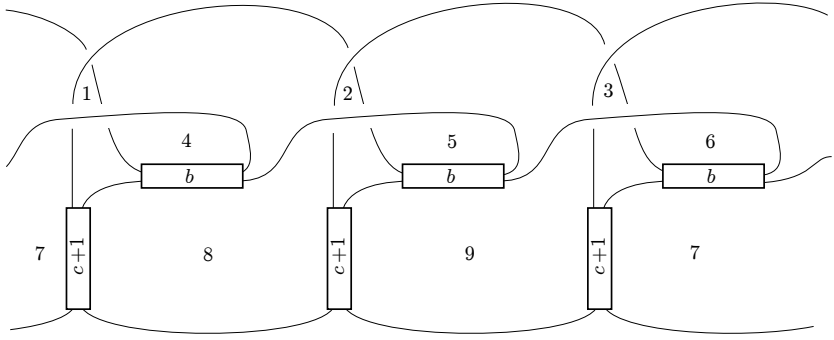}
\caption{The link $\mathcal{L}$ and the white regions when $a=1$}\label{fig:white}
\end{figure}

Figure \ref{fig:sign} shows the convention of sign for each crossing.
The $(3a+6)\times (3a+6)$ Goeritz matrix $G$ is defined as follows.
For $i\ne j$, the $(i,j)$-entry of $G$ is the sum of signs at all the crossings
between the regions $\alpha_i$ and $\alpha_j$.
The $(i,i)$-entry is  $-\sum \mathrm{sign}(c)$, where the sum is over all crossings $c$
around the region $\alpha_i$.
Then it is well known that $|\det G|$ equals to the determinant of $\mathcal{L}$.

\begin{figure}[tb]
\includegraphics*[scale=0.6]{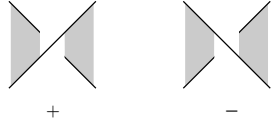}
\caption{Signs of crossing}\label{fig:sign}
\end{figure}

For example, if $a=1$, then the Goeritz matrix $G_1$ is

\[
\left(
\begin{array}{ccc|ccc|ccc}
& \mbox{\large $-I$}&  & & \mbox{\large $-I$} &  && \mbox{\large $I$} &\\
\hline
&&&&&& 0& -b & 0\\
& \mbox{\large $-I$}&  & & \mbox{\large $(b+1)I$} &  & 0 & 0 & -b\\
&&&&&& -b & 0 & 0\\
\hline
&&& 0& 0& -b &  b+2c+1 & -c-1 & -c-1\\
& \mbox{\large $I$}&  &  -b&  0 & 0 & -c-1 & b+2c+1 & -c-1\\
&&& 0 & -b & 0 & -c-1 & -c-1 & b+2c+1\\
\end{array}
\right),
\]
where $I$ denotes the $3\times 3$ identity matrix.
Then a direct calculation shows $\det G_1=(3bc+6b+6c+5)^2$.
Since this value is positive, we have $\det \mathcal{L}=\det G_1$.

\section{The case where $a=1$}\label{sec:a1}

The purpose of this section is to show that
the link $\mathcal{L}$ is quasi-alternating when $a=1$.
The link diagram $D$ is illustrated in Figure \ref{fig:white}.
For $i\in \{1,2,3\}$, let $c_i$ be the upper crossing of the white region $\alpha_i$.
Let $\varepsilon_i\in \{*,\infty,0\}$.
We use the notation $L(\varepsilon_1,\varepsilon_2,\varepsilon_3)$
to express the link obtained from the link diagram $D$
by performing a resolution of type $\varepsilon_i$ at the crossing $c_i$.
Here, if $\varepsilon_i=*$, then the crossing $c_i$ is not changed.
If $\varepsilon_i=\infty$ or $0$, then $c_i$ is split vertically or horizontally, respectively,
as in Figure \ref{fig:resolution}.

\begin{lemma}\label{lem:a1qa}
\begin{enumerate}
\item $L(0,0,*)=L(0,\infty,0)=P(b+c+1,b+c+1,b+c+1)$.  Hence these are alternating.
\item $L(0,\infty,\infty)=L(\infty,0,\infty)=L(\infty,\infty,0)$, and these are alternating.
\item $L(\infty,\infty,\infty)$ is quasi-alternating.
\end{enumerate}
\end{lemma}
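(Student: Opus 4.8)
The plan is to analyze the link diagram $D$ of Figure \ref{fig:white} directly and identify each of the resolved links by simplifying the resulting diagrams through isotopies, Reidemeister moves, and absorption of twists. First I would set up coordinates for the three crossings $c_1,c_2,c_3$ sitting at the tops of the white regions $\alpha_1,\alpha_2,\alpha_3$ (the $a=1$ vertical twist region), so that performing a $0$- or $\infty$-resolution there either deletes a half-twist from one of the three "legs" of the pretzel-like picture or reconnects strands so as to merge or split a tangle. With $a=1$ each of these regions carries a single half-twist, so a resolution at $c_i$ completely removes the $i$-th vertical crossing; this is what should turn the block diagram $\mathcal{L}$ back into something recognizably of pretzel type.

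For part (1), after performing the $0$-resolution at $c_1$ and $c_2$ I would trace the diagram and show that it becomes the standard alternating pretzel diagram $P(b+c+1,b+c+1,b+c+1)$; the point is that smoothing at $c_1$ and $c_2$ fuses the upper portion of the diagram so that the $b$ horizontal twists and the $c$ twists (together with leftover half-twists) redistribute into three symmetric strands each carrying $b+c+1$ half-twists. The claimed equality $L(0,0,*)=L(0,\infty,0)$ should follow because, once $c_1$ is $0$-resolved, a flype or a planar isotopy identifies the effect of leaving $c_2$ alone versus $\infty$-resolving it and $0$-resolving $c_3$ — I would exhibit the explicit isotopy. As a sanity check, $\det P(b+c+1,b+c+1,b+c+1) = 3(b+c+1)^2$, which I would confirm against the Goeritz-matrix method of Section \ref{sec:det} to make sure the identification is correct. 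Alternation of a standard pretzel diagram with all parameters of the same sign is immediate, so "hence these are alternating" needs no further argument.

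For parts (2) and (3) the strategy is the same: resolve the two indicated crossings, redraw, and simplify. For (2) I expect that $\infty$-resolving two of the $c_i$ collapses two of the three vertical legs, and the remaining diagram reduces (after Reidemeister I/II moves removing the now-isolated curls) to a two-bridge or simple pretzel diagram that is visibly alternating; the threefold symmetry of the original picture (the three legs are interchangeable) is what forces $L(0,\infty,\infty)=L(\infty,0,\infty)=L(\infty,\infty,0)$, and I would phrase that as: there is a diagram automorphism of $D$ cyclically permuting $c_1,c_2,c_3$, hence permuting the labels $(\varepsilon_1,\varepsilon_2,\varepsilon_3)$. For (3), $\infty$-resolving all three $c_i$ should kill all three vertical legs and leave only the $b$ horizontal twists and the $c$ twists arranged in a small tangle; I would identify this link explicitly — most likely a connected sum or a two-bridge link built from the parameters $b$ and $c$ — and then conclude quasi-alternating either because it is alternating (if the resulting diagram is reduced and alternating) or by one more application of the recursive definition, reducing to smaller $2$-bridge links whose quasi-alternating-ness is known, checking the determinant additivity $\det L = \det L_\infty + \det L_0$ at the chosen crossing.

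The main obstacle I anticipate is purely diagrammatic bookkeeping: carefully tracking how strands reconnect after each smoothing in the block diagram of Figure \ref{fig:block}/\ref{fig:white}, since an error in the reconnection pattern would give the wrong link, and the symmetry claims (the equalities among different resolution triples) must be justified by an honest isotopy rather than a hand-wave. To keep this under control I would, for each case, also compute the determinant via the Goeritz matrix $G_1$ with the appropriate rows/columns modified to reflect the resolutions, and check it against the determinant of the claimed model link (e.g. $3(b+c+1)^2$ for the pretzel in (1)); agreement of determinants plus a matching crossing number of the reduced diagram gives strong confirmation that the identification is right. The genuinely new content is small — everything reduces to recognizing standard links — so once the diagrams are drawn correctly the proof is routine, and the "hard part" is really just the figure-chasing.
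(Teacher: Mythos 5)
Your treatment of parts (1) and (2) matches the paper's proof, which is exactly this kind of figure-chasing: the equalities in (1) are read off the diagrams, the equalities in (2) come from the $\mathbb{Z}_3$ symmetry of the diagram of $\mathcal{L}$ (it is the preimage of $A$ in a $3$-fold cyclic cover, so the three ``legs'' are genuinely interchangeable), and alternation of $L(0,\infty,\infty)$ is established by exhibiting an explicit alternating diagram (Figure \ref{fig:a1-0ii}). Your suggestion to cross-check each identification against the Goeritz determinant is a sensible safeguard that the paper does implicitly via Table \ref{table:det}.

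Part (3), however, contains a genuine gap. You propose to identify $L(\infty,\infty,\infty)$ as ``a connected sum or a two-bridge link built from $b$ and $c$'' and to finish with ``one more application of the recursive definition.'' That identification cannot be correct: $\infty$-resolving all three crossings $c_1,c_2,c_3$ deletes the entire $a$-twist region, so $L(\infty,\infty,\infty)$ is the analogue of $\mathcal{L}$ for the $2$-bridge pretzel $P(1,2b+1,2c+1)$, i.e.\ the lift of the axis $A$ in the $3$-fold cyclic branched cover of that genus-one $2$-bridge knot. Its double branched cover is $\Sigma_3$ of that knot, whose first homology is $(\mathbb{Z}_n)^2$ with $n=3bc+3b+3c+2$ (consistent with $\det L(\infty,\infty,\infty)=(3bc+3b+3c+2)^2$ in Table \ref{table:det}) and which is an irreducible manifold that is not a lens space nor a connected sum of lens spaces; hence the link is neither $2$-bridge nor a connected sum of $2$-bridge links, and it is not alternating in general. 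Establishing that this link is quasi-alternating is precisely the content of Peters' theorem on $3$-fold covers of genus-one $2$-bridge knots, which the paper cites at this point and which itself requires a separate induction on the twist parameters $b,c$ --- it does not follow from a single application of the recursive definition of QA. So you must either quote Peters' result (as the paper does) or reproduce his inductive argument; without one of these, part (3) is unproved.
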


\begin{proof}
(1) This is obvious from their diagrams.

(2) The equivalence of three links follows from the symmetry.
An alternating diagram of $L(0,\infty,\infty)$ is illustrated in Figure \ref{fig:a1-0ii}.

\begin{figure}[tb]
\includegraphics*[scale=0.4]{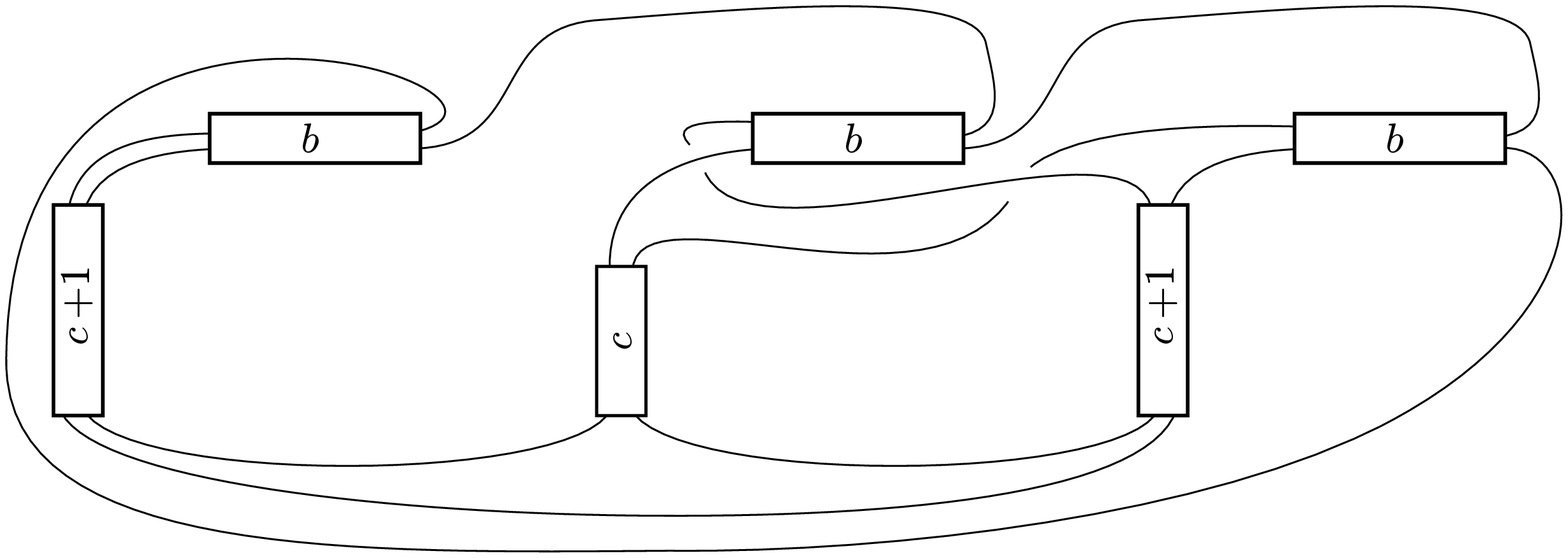}
\caption{$L(0,\infty,\infty)$ is alternating}\label{fig:a1-0ii}
\end{figure}

(3) The link $L(\infty,\infty,\infty)$ is equivalent to one as in Figure \ref{fig:a1-iii}.
This link is shown to be quasi-alternating by Peters \cite{P}.
\end{proof}

\begin{figure}[tb]
\includegraphics*[scale=0.4]{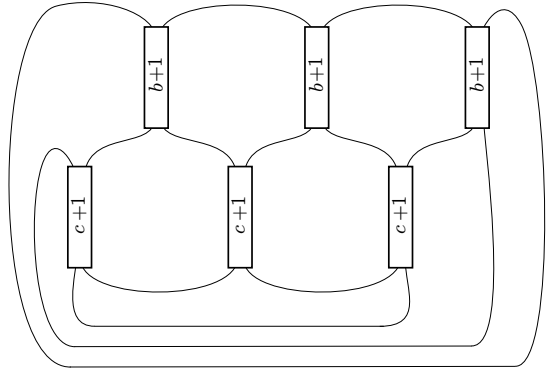}
\caption{$L(\infty,\infty,\infty)$}\label{fig:a1-iii}
\end{figure}

To conclude that $\mathcal{L}$ is quasi-alternating,
we need the values of determinants of some of links $L(\varepsilon_1,\varepsilon_2,\varepsilon_3)$.

Recall that the diagram  $D$ (Figure \ref{fig:white}) of $\mathcal{L}$ yields the Goeritz matrix $G_1$ described in Section \ref{sec:det}.
For $L(0,*,*)$ (resp. $L(\infty,*,*)$),
its diagram is obtained from $D$ by splitting the crossing $c_1$ horizontally (resp. vertically).
Then, the corresponding Goeritz matrix is 
obtained from $G_1$ by replacing the $(1,1)$-entry with $0$, or
deleting the first row and column, respectively. 
In this way, calculating determinants of the matrices gives Table \ref{table:det}.

\begin{table}[htb]
\begin{tabular}{c|c}
\rm{Link} & \rm{Determinant} \\
\hline \hline
$L(0,*,*)$ & $2(b+c+1)(3bc+6b+6c+5)$ \\
$L(\infty,*,*)$ & $(3bc+4b+4c+3)(3bc+6b+6c+5)$ \\
\hline
$L(0,0,*)$  &  $3(b+c+1)^2$  \\
$L(0,\infty,*)$ &  $(b+c+1)(6bc+9b+9c+7)$ \\
\hline
$L(\infty,0,*)$ &  $(b+c+1)(6bc+9b+9c+7)$ \\
$L(\infty,\infty,*)$  & $(3bc+3b+3c+2)(3bc+5b+5c+4)$ \\
\hline
$L(0,\infty,0)$  &  $3(b+c+1)^2$ \\
$L(0,\infty,\infty)$  & $2(b+c+1)(3bc+3b+3c+2)$ \\
\hline
$L(\infty,\infty,\infty)$ &  $(3bc+3b+3c+2)^2$ \\
\end{tabular}
\vspace{3mm}
\caption{Determinants of links}\label{table:det}
\end{table}

\begin{theorem}\label{thm:qa-a1}
Assume $a=1$.   Then the link $\mathcal{L}$ is quasi-alternating.
Furthermore, $L(0,*,*)$ and $L(\infty,*,*)$ are quasi-alternating.
\end{theorem}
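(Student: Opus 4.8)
\textbf{Proof proposal for Theorem \ref{thm:qa-a1}.}
The plan is to build the quasi-alternating certificate for $\mathcal{L}$ by resolving the three crossings $c_1,c_2,c_3$ one at a time, using the determinant data in Table \ref{table:det} to check the additivity condition $\det L=\det L_\infty+\det L_0$ at each stage, and invoking Lemma \ref{lem:a1qa} as the base of the recursion. First I would show $L(\infty,\infty,\infty)$, $L(\infty,\infty,0)$, $L(0,\infty,0)$, and $L(0,0,*)$ are quasi-alternating: the first is in $\mathrm{QA}$ by Lemma \ref{lem:a1qa}(3), and the latter three are alternating (hence non-split alternating, hence in $\mathrm{QA}$) by Lemma \ref{lem:a1qa}(1)--(2). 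Then I would climb the resolution tree: to see $L(\infty,\infty,*)\in\mathrm{QA}$, resolve $c_3$ and check $\det L(\infty,\infty,*)=\det L(\infty,\infty,\infty)+\det L(\infty,\infty,0)$, i.e.\ $(3bc+3b+3c+2)(3bc+5b+5c+4)=(3bc+3b+3c+2)^2+2(b+c+1)(3bc+3b+3c+2)$, which holds since $(3bc+5b+5c+4)=(3bc+3b+3c+2)+2(b+c+1)$. Similarly $L(0,\infty,*)\in\mathrm{QA}$ follows by resolving $c_3$ and using $\det L(0,\infty,0)=3(b+c+1)^2$ together with $\det L(0,\infty,\infty)=2(b+c+1)(3bc+3b+3c+2)$, whose sum is $(b+c+1)(6bc+9b+9c+7)=\det L(0,\infty,*)$.

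Next I would handle the second coordinate. For $L(\infty,*,*)$: resolve $c_2$, obtaining $L(\infty,\infty,*)$ and $L(\infty,0,*)$; the former is now known to be in $\mathrm{QA}$, and $L(\infty,0,*)$ coincides (by the left--right symmetry of the twist regions, since with $c_1$ resolved vertically the two inner tangles are exchanged) with $L(0,\infty,*)$ up to the determinant value, or more directly $L(\infty,0,*)$ is alternating; either way it is in $\mathrm{QA}$, and one checks $\det L(\infty,*,*)=\det L(\infty,\infty,*)+\det L(\infty,0,*)$, i.e.\ $(3bc+4b+4c+3)(3bc+6b+6c+5)=(3bc+3b+3c+2)(3bc+5b+5c+4)+(b+c+1)(6bc+9b+9c+7)$. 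For $L(0,*,*)$: resolve $c_2$, obtaining $L(0,\infty,*)\in\mathrm{QA}$ and $L(0,0,*)\in\mathrm{QA}$, and verify $2(b+c+1)(3bc+6b+6c+5)=(b+c+1)(6bc+9b+9c+7)+3(b+c+1)^2$, which reduces after cancelling $(b+c+1)$ to $2(3bc+6b+6c+5)=(6bc+9b+9c+7)+3(b+c+1)$. This establishes the ``furthermore'' clause.

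Finally, to show $\mathcal{L}\in\mathrm{QA}$ I resolve the crossing $c_1$ of the diagram $D$: the two smoothings are exactly $L(\infty,*,*)$ and $L(0,*,*)$, both now in $\mathrm{QA}$, and the required identity is $\det\mathcal{L}=\det L(\infty,*,*)+\det L(0,*,*)$, i.e.\ $(3bc+6b+6c+5)^2=(3bc+4b+4c+3)(3bc+6b+6c+5)+2(b+c+1)(3bc+6b+6c+5)$, which holds since $(3bc+6b+6c+5)=(3bc+4b+4c+3)+2(b+c+1)$. Throughout, the crucial point making every smoothing yield a genuinely smaller link is that each $L(\varepsilon_1,\varepsilon_2,\varepsilon_3)$ we land on is either handled by Lemma \ref{lem:a1qa} or has strictly fewer unresolved crossings among $c_1,c_2,c_3$, so the recursion is well-founded. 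The main obstacle I anticipate is not the arithmetic of the determinant identities (these are one-line factorizations, all of the shape ``middle term $=$ sum of the two neighbours'') but rather justifying carefully, from the diagrams, that the smoothings of $D$ at $c_1$ really are the links $L(\infty,*,*)$ and $L(0,*,*)$ with the claimed diagrams, and that the symmetry identifications $L(\infty,0,*)$ versus $L(0,\infty,*)$ used above are the ones actually realized by an ambient isotopy of the diagram in Figure \ref{fig:white}; this is where one must be attentive to which tangle sits in which box.
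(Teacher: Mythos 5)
Your proposal follows the paper's argument essentially verbatim: build the quasi-alternating certificate bottom-up along the resolution tree at $c_1,c_2,c_3$, using Lemma \ref{lem:a1qa} for the leaves and the determinant identities from Table \ref{table:det} (all of the shape ``middle $=$ sum of neighbours'') at each branching. All of your determinant verifications are correct.

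The one weak spot is your treatment of $L(\infty,0,*)$. Neither of the two justifications you offer is reliable: the assertion that it ``is alternating'' has no support (the analogous link $L(0,\infty,*)$ is not alternating either --- the paper certifies it only by a further resolution), and the symmetry you invoke is not the one the diagram actually has. The symmetry used in Lemma \ref{lem:a1qa}(2) is the cyclic permutation of the three columns coming from the $\mathbb{Z}_3$-covering, which sends $L(\infty,0,*)$ to $L(0,*,\infty)$ and $L(*,\infty,0)$, not to $L(0,\infty,*)$; a transposition of two columns fixing the third would need a separate argument. Fortunately the fix is immediate and is exactly what the paper means by ``the same argument'': resolve $c_3$ in $L(\infty,0,*)$ to get $L(\infty,0,0)=P(b+c+1,b+c+1,b+c+1)$ and $L(\infty,0,\infty)=L(0,\infty,\infty)$, both alternating, and check $3(b+c+1)^2+2(b+c+1)(3bc+3b+3c+2)=(b+c+1)(6bc+9b+9c+7)=\det L(\infty,0,*)$. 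With that substitution your proof is complete and coincides with the paper's.
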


\begin{proof}
By Lemma \ref{lem:a1qa}, 
$L(0,\infty,0)$ and $L(0,\infty,\infty)$ are alternating.
As shown in Table \ref{table:det},
we have $\det L(0,\infty,*) = \det L(0,\infty,0) + \det L(0,\infty,\infty)$.
Hence $L(0,\infty,*)$ is quasi-alternating.
Similarly, because $L(0,0,*)$ is alternating
and 
$\det L(0,*,*) = \det L(0,0,*) + \det L(0,\infty,*)$,
$L(0,*,*)$ is quasi-alternating.
Also, we can verify that $L(\infty,*,*)$ is quasi-alternating by the same argument.
Finally, 
the equation $\det \mathcal{L}= \det L(0,*,*) + \det L(\infty,*,*)$
implies the conclusion that $\mathcal{L}$ is quasi-alternating.
\end{proof}

\section{Induction}\label{sec:induction}

As in Section \ref{sec:a1},
we use the notation $L(a\colon \varepsilon_1,\varepsilon_2,\varepsilon_3)$ with
$\varepsilon\in \{*,\infty,0\}$
to denote the link obtained from $\mathcal{L}$ by performing the resolution
of type $\varepsilon_i$ at the crossing $c_i$.
Here, $c_i$ is located at the top of the white region $\alpha_i$.
See Figure \ref{fig:block}.
Because we will use an inductive argument, the parameter $a$ is added.
In particular, $\mathcal{L}=L(a\colon *,*,*)$.

\begin{lemma}\label{lem:diagram}
Suppose $a>1$.
\begin{enumerate}
\item $L(a\colon 0,0,*)=L(a\colon 0,\infty,0)=L(a\colon \infty,0,0)=P(b+c+1,b+c+1,b+c+1)$, and these are alternating.
\item $L(a\colon 0,\infty,\infty)=L(a\colon \infty,0,\infty)=L(a\colon \infty,\infty,0)=L(a-1\colon 0,*,*)$.
\item $L(a\colon \infty,\infty,\infty)=L(a-1\colon *,*,*)$.
\end{enumerate}
\end{lemma}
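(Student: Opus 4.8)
The plan is to prove each of the three identities by manipulating the link diagram of $\mathcal{L}$ directly, tracking what happens to the pretzel structure when we resolve the crossings $c_1,c_2,c_3$ sitting at the tops of the white regions $\alpha_1,\alpha_2,\alpha_3$. Recall from Section~\ref{sec:block} that $\mathcal{L}$ is built from three vertical twist regions stacked on the left (the lift of the two vertical boxes of $k$, with $a$, then $b$-type data) together with a horizontal twist region of $2c+1$ half-twists on the lower right, and that the white regions $\alpha_1,\dots,\alpha_{3a}$ interleave the $a$ ``levels'' of the upper three columns. The key local observation is that the crossing $c_i$ is exactly the \emph{topmost} crossing of column $i$ in the stacked part of the diagram, so a vertical ($\infty$) resolution at $c_i$ deletes one half-twist from that column, while a horizontal ($0$) resolution at $c_i$ pinches column $i$ off at the top, effectively deleting the entire column from the upper block and short-circuiting the remaining strands.

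First I would treat part~(3). Performing $\infty$ at all three of $c_1,c_2,c_3$ removes the top half-twist from each of the three upper columns simultaneously; since each column originally contributed to the ``$a$-level'' structure, after the resolution the three columns have one fewer crossing each and the diagram is literally the diagram of $\mathcal{L}$ built with parameter $a-1$ in place of $a$, with $b$ and $c$ unchanged. Comparing with Figure~\ref{fig:block} this is exactly $L(a-1\colon *,*,*)$, proving $L(a\colon\infty,\infty,\infty)=L(a-1\colon *,*,*)$. (One should check the edge behaviour: when $a-1=1$ the resulting diagram matches Figure~\ref{fig:white}, so the identity is consistent with the base case of Section~\ref{sec:a1}.)

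Next, part~(2): by the interchange symmetry of the columns noted at the end of Section~\ref{sec:block} (the two components of $k\cup A$ are interchangeable, which descends to a symmetry cyclically permuting $\alpha_1,\alpha_2,\alpha_3$ together with $\alpha_4,\alpha_5,\alpha_6$, etc.), the three links $L(a\colon 0,\infty,\infty)$, $L(a\colon\infty,0,\infty)$, $L(a\colon\infty,\infty,0)$ are equivalent, so it suffices to identify one of them. Starting from $L(a\colon\infty,\infty,\infty)=L(a-1\colon *,*,*)$ and instead doing $0$ at $c_1$ (rather than $\infty$): the $0$-resolution pinches off the top of column $1$, which collapses the top level of the upper block and forces the two strands that used to pass through the top of column $1$ to join; isotoping, this is the same as taking the diagram with parameter $a-1$ and then applying the $0$-resolution at the crossing now sitting at the top of its first white region, i.e.\ $L(a-1\colon 0,*,*)$. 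Thus $L(a\colon 0,\infty,\infty)=L(a-1\colon 0,*,*)$, and the symmetry finishes part~(2).

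Finally, part~(1): doing $0$ at $c_1$ and $0$ at $c_2$ pinches off the tops of columns $1$ and $2$; the horizontal twist region on the lower right is untouched, and a direct isotopy shows the whole upper block degenerates so that what survives is a standard pretzel diagram whose three twist regions carry $b+c+1$ crossings each --- precisely $P(b+c+1,b+c+1,b+c+1)$; this is a standard alternating pretzel knot/link. The same target is reached from $(0,\infty,0)$ and from $(\infty,0,0)$, again by the column symmetry combined with the fact that an $\infty$-resolution on the middle column followed by the two $0$-resolutions produces an isotopic tangle. The identification of the surviving twist numbers as $b+c+1$ is the one slightly delicate point: it requires carefully combining the $b$ vertical half-twists carried over from Figure~\ref{fig:link} with the $2c+1$ horizontal half-twists after the pinching collapses the two blocks into a single twist region, and checking a sign/handedness bookkeeping so that the result is genuinely alternating. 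That bookkeeping --- making sure the diagram after the collapse is reduced and alternating rather than hiding cancelling twists --- is the main obstacle; everything else is routine diagram chasing, and can be read off from the figures analogous to Figures~\ref{fig:white}, \ref{fig:a1-0ii}, and \ref{fig:a1-iii} drawn for general $a$.
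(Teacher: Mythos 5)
Your proposal is correct in substance and takes essentially the same route as the paper, whose entire proof of this lemma is the single sentence ``These immediately follow from the diagrams'': you are just writing out the diagram chasing (top crossing of each $a$-column, $\infty$-smoothing peels off one half-twist per column, $0$-smoothing caps the column off and untwists it), and your identifications are consistent with all the determinant data in Tables 1 and 2. One small correction: the cyclic symmetry permuting the three columns of $\mathcal{L}$ comes from the deck transformation of the $3$-fold branched cover $\Sigma_3(k)\to S^3$ (visible in the circulant block structure of the Goeritz matrix), not from the interchangeability of the two components of $k\cup A$, which is a $\mathbb{Z}_2$ symmetry of the quotient link used only to draw $\mathcal{L}$ conveniently.
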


\begin{proof}
These immediately follow from the diagrams.
\end{proof}

\begin{lemma}\label{lem:cal}
For $\mathcal{L}$, $L(a\colon 0,*,*)$ and $L(a\colon \infty,*,*)$,
\begin{eqnarray*}
\det \mathcal{L} &=& (3ab+3bc+3ca+3a+3b+3c+2)^2,\\
\det L(a\colon  0,*,*) &= & 2(b+c+1)(3ab+3bc+3ca+3a+3b+3c+2),\\
\det L(a\colon  \infty,*,*) & = & (3ab+3bc+3ca+3a+b+c)(3ab+3bc+3ca+3a+3b+3c+2).
\end{eqnarray*}
Hence $\det \mathcal{L}=\det L(a\colon 0,*,*)+\det L(a\colon \infty,*,*)$.
\end{lemma}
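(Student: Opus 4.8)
The plan is to compute all three determinants by the same Goeritz-matrix method already set up in Section \ref{sec:det}, using the general $(3a+6)\times(3a+6)$ matrix $G_a$ that encodes the diagram of $\mathcal{L}$ in Figure \ref{fig:block}. First I would write down $G_a$ explicitly. Its block structure mirrors that of $G_1$: the three columns of white regions $\alpha_1,\dots,\alpha_{3a}$ coming from the $a$ vertical half-twists at the top form a ``ladder,'' where each triple $\{\alpha_{3i-2},\alpha_{3i-1},\alpha_{3i}\}$ is joined to the triple above and below by $\pm I$ blocks, so that the upper-left $3a\times 3a$ part of $G_a$ is tridiagonal in $3\times 3$ blocks with $2I$ (or similar) on the diagonal and $-I$ off-diagonal, except that the last diagonal block interacts with the $\alpha_{3a+1},\alpha_{3a+2},\alpha_{3a+3}$ block exactly as the single top block did in $G_1$. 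The bottom $6\times 6$ part (rows/columns $3a+1,\dots,3a+6$), which records the horizontal $b$-twists and the lower $c$-twists, is \emph{identical} to the corresponding part of $G_1$; only the ladder at the top grows with $a$.

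Next I would evaluate $\det G_a$ by exploiting this near-block-tridiagonal structure: expand along the ladder using the standard transfer-matrix/continuant recursion for block-tridiagonal matrices. Row/column reducing the ladder from the top downwards replaces the interaction of the bottom $6\times6$ block with a single effective $3\times3$ block whose entries are rational functions of $a$ (essentially a continued-fraction tail), and one checks by an easy induction on $a$ that this reduction turns the $(b+1)I$ block of $G_1$ into an $\bigl(\frac{?}{?}\bigr)I$-type correction — concretely, eliminating the ladder multiplies an overall factor and shifts the central diagonal entry in a way that accounts for the passage $3bc+6b+6c+5 \rightsquigarrow 3ab+3bc+3ca+3a+3b+3c+2$. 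The cleanest bookkeeping is: show by induction that after clearing rows/columns $1,\dots,3(a-1)$, the matrix $G_a$ becomes (up to a unit determinant factor) exactly $G_1$ with $b$ replaced by a suitable linear-in-$a$ parameter; then quote the $a=1$ computation of $\det G_1$ already recorded in Section \ref{sec:det}. For $L(a\colon 0,*,*)$ replace the $(1,1)$-entry by $0$ before reducing; for $L(a\colon \infty,*,*)$ delete the first row and column; in both cases the same ladder-reduction applies and yields the two claimed products.

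Once the three determinant formulas are in hand, the final sentence is a one-line algebraic identity: writing $S = 3ab+3bc+3ca+3a+3b+3c+2$ and $T = 3ab+3bc+3ca+3a+b+c$, one has $2(b+c+1) + T = 2b+2c+2 + 3ab+3bc+3ca+3a+b+c = S$, so $2(b+c+1)S + T S = S^2 = \det\mathcal{L}$, which is exactly $\det L(a\colon 0,*,*) + \det L(a\colon \infty,*,*)$. The main obstacle is purely computational: carrying out the block-tridiagonal reduction of $G_a$ cleanly enough that the induction step is transparent and the messy intermediate rational functions visibly collapse to the stated polynomials. I expect that choosing the right normalization of the white regions (and hence of the signs in $G_a$) so that the ladder blocks are literally $-I$ and the diagonal blocks literally $2I$ is what makes the continuant recursion telescope; getting that normalization right is the delicate part, after which everything is routine determinant algebra.
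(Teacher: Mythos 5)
Your overall strategy is exactly the paper's: write down the $(3a+6)\times(3a+6)$ Goeritz matrix, whose top $3a\times 3a$ part is a block-tridiagonal ladder (diagonal blocks $-2I$, off-diagonal blocks $I$, up to an overall sign convention) sitting above the $9\times 9$ matrix $G_1$ from Section \ref{sec:det}, collapse the ladder by column operations so that everything reduces to a small perturbation of $G_1$, and finish with the identity $2(b+c+1)+T=S$, which is also precisely how the paper closes the lemma. Two concrete details of your plan would go wrong if executed literally, however. First, for $L(a\colon 0,*,*)$ with $a>1$ the $(1,1)$-entry of $G_a$ is $-2$, because $\alpha_1$ now meets a crossing above \emph{and} a crossing below; resolving $c_1$ horizontally deletes only the contribution of the top crossing, so the entry must be replaced by $-1$, not by $0$. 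The ``replace by $0$'' recipe is an artifact of the $a=1$ case, where that entry starts at $-1$. Following your recipe verbatim would give the wrong value of $\det L(a\colon 0,*,*)$ for every $a>1$.

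Second, the ladder reduction does not land on ``$G_1$ with $b$ replaced by a linear-in-$a$ parameter,'' nor does it shift the central $(b+1)I$ block: $b$ occurs throughout the off-diagonal entries of the lower $6\times 6$ part, which the ladder never touches. What the telescoping actually produces (and what the paper records) is $(-1)^{a-1}a^3$ times $G_1$ with its \emph{upper-left} block $-I$ replaced by $-\tfrac1a I$; for the two resolved links the corresponding diagonal entries become $0,-\tfrac1a,-\tfrac1a$ and $-\tfrac1{a-1},-\tfrac1a,-\tfrac1a$ respectively, with prefactors $(-1)^{a-1}a^2$ and $(-1)^{a-3}(a-1)a^2$. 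So the skeleton of your argument is the paper's, and the final algebra is correct, but the two places where you commit to specific matrix entries are exactly the places where the proposal, as stated, would produce incorrect determinants; those need to be fixed before the ``routine determinant algebra'' can go through.
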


\begin{proof}
Let $G$ be the $(3a+6)\times (3a+6)$ Goeritz matrix obtained from the link diagram $D$ of $\mathcal{L}$.
As in Section \ref{sec:det}, 

\[
G=\left (
\begin{array}{cccc|ccc}
 -2I & I &&&&& \\
I &  \ddots &&&&  & \\
&& -2I & I &&& \\
&& I   & -2I & I & O& O\\
\hline
&&     & I   & &  & \\
&&     &  O   &  &   \mbox{\Huge $G_1$} & \\
&&     &  O   &  &                     & \\
\end{array}
\right ),
\]
where $I$ is the $3\times 3$ identity matrix,
$O$ is the $3\times 3$ zero matrix, and $G_1$ is exactly the $9\times 9$ matrix given in Section \ref{sec:det}.
To calculate its determinant,
add the $i$-th column multiplied by $1/2$ to the $(i+3)$-th column for $i=1,2,3$.
Then reduce the matrix to a $(3a+3)\times (3a+3)$ matrix.
By repeating this process, we have $\det G=(-1)^{a-1}a^3 \det G_1'$, where
$G_1'$ is obtained from $G_1$ by replacing the upper left $3\times 3$ block $-I$
with $-\frac{1}{a}I$.
Thus $\det G=(-1)^{a-1}(3ab+3bc+3ca+3a+3b+3c+2)^2$, and so
$\det \mathcal{L}=(3ab+3bc+3ca+3a+3b+3c+2)^2$.


Consider the diagram of $L(a\colon  0,*,*)$ obtained from the diagram $D$ (Figure \ref{fig:block})
by splitting the crossing $c_1$ horizontally.
The corresponding Goeritz matrix $G_0$ is the above $G$ with replacing the $(1,1)$-entry with
$-1$.
Add the first column to the $4$-th column, and the $i$-th column multiplied by $1/2$ to
the $(i+3)$-th column for $i=2,3$.
Then reduce the size as before.
Repeating this gives $\det G_0=(-1)^{a-1}a^2 \det G_1''$, where
$G_1''$ is obtained from $G_1$ by replacing the $(i,i)$-entry with
$0$, $-1/a$, $-1/a$, respectively, for $i=1,2,3$.
Then we have $\det L(a\colon  0,*,*)=2(b+c+1)(3ab+3bc+3ca+3a+3b+3c+2)$.


Finally, the diagram of $L(a\colon  \infty,*,*)$ is obtained from $D$
by splitting the crossing $c_1$ vertically.
The corresponding Goeritz matrix $G_\infty$ is $G$ with deleting
the first column and row.
Add the $i$-th column multiplied by $1/2$ to the $(i+3)$-th column for $i=1,2$.
Then reduce the size of matrix.
Repeating this yields $\det G_\infty=(-1)^{a-3} (a-1)a^2 \det G_1'''$, where
$G_1'''$ is obtained from $G_1$ by replacing the $(i,i)$-entry with $-1/(a-1)$, $-1/a$, $-1/a$,
respectively, for $i=1,2,3$.
Then $\det L(a\colon  \infty,*,*) = (3ab+3bc+3ca+3a+b+c)(3ab+3bc+3ca+3a+3b+3c+2)$.
\end{proof}

By a similar process to the proof of Lemma \ref{lem:cal},
we can calculate of determinants of some other links, as in Table \ref{table:ind}.
We omit the details.

\begin{table}[htb]
\begin{tabular}{c|c}
\rm{Link} & \rm{Determinant} \\
\hline \hline
$L(a\colon 0,0,*)$ & $3(b+c+1)^2$ \\
$L(a\colon 0,\infty,*)$ & $(b+c+1)(6ab+6bc+6ca+6a+3b+3c+1)$ \\
$L(a\colon \infty,\infty,*)$ & $  (3ab+3bc+3ca+3a+2b+2c+1)(3ab+3bc+3ca+3a-1)$ \\
\end{tabular}
\vspace{3mm}
\caption{Determinants of links}\label{table:ind}
\end{table}

\begin{lemma}\label{lem:sum}
We have the following equations.
\begin{eqnarray*}
\det L(a\colon 0,*,*) &=& \det L(a\colon 0,0,*) +\det L(a\colon 0,\infty,*), \\
\det L(a\colon 0,\infty,*) &=& \det L(a\colon 0,\infty,0) +\det L(a\colon 0,\infty,\infty), \\
\det L(a\colon \infty,*,*) &=& \det L(a\colon \infty,0,*) +\det L(a\colon \infty,\infty,*), \\
\det L(a\colon \infty,0,*) &=& \det L(a\colon \infty,0,0) +\det L(a\colon \infty,0,\infty), \\
\det L(a\colon \infty,\infty,*) &=& \det L(a\colon \infty,\infty,0) +\det L(a\colon \infty,\infty,\infty).
\end{eqnarray*}
\end{lemma}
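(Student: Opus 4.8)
The plan is to verify each of the five identities by direct polynomial algebra, using the determinant values already computed in Lemma \ref{lem:cal} and Table \ref{table:ind}, together with those in Table \ref{table:det} for the base case $a=1$, and the structural identifications in Lemma \ref{lem:diagram}. The key observation is that Lemma \ref{lem:diagram} lets us replace the ``doubly resolved'' links on the right-hand sides with links that either are of pretzel type $P(b+c+1,b+c+1,b+c+1)$ (hence have a determinant we can compute once and for all) or are instances of $L(a-1\colon\cdot)$, for which the determinant formulas of Lemma \ref{lem:cal} apply with $a$ replaced by $a-1$. Thus every quantity appearing in the five equations is an explicit polynomial in $a,b,c$, and each equation becomes a polynomial identity to be checked.

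Concretely, first I would record the determinant of $P(b+c+1,b+c+1,b+c+1)$: this is a genus-one pretzel knot, and a small Goeritz-matrix computation (or the formula $\det P(p,q,r)=pq+qr+rp$) gives $\det = 3(b+c+1)^2$, which matches the entries for $L(a\colon 0,0,*)$ and $L(a\colon 0,\infty,0)$ in Table \ref{table:ind} and its analogues. Next, using Lemma \ref{lem:diagram}(2)--(3), I would write
\begin{eqnarray*}
\det L(a\colon 0,\infty,\infty) &=& \det L(a-1\colon 0,*,*) \;=\; 2(b+c+1)\bigl(3(a-1)b+3bc+3c(a-1)+3(a-1)+3b+3c+2\bigr),\\
\det L(a\colon \infty,\infty,\infty) &=& \det L(a-1\colon *,*,*) \;=\; \bigl(3(a-1)b+3bc+3c(a-1)+3(a-1)+3b+3c+2\bigr)^2,
\end{eqnarray*}
and similarly $\det L(a\colon\infty,0,\infty)=\det L(a\colon 0,\infty,\infty)$ and $\det L(a\colon\infty,\infty,0)=\det L(a\colon 0,\infty,\infty)$ by the equalities of links in Lemma \ref{lem:diagram}(2). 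With these substitutions in hand, each of the five claimed equations is a matter of expanding both sides and comparing coefficients; for instance the second equation reads
\[
(b+c+1)(6ab+6bc+6ca+6a+3b+3c+1) \;=\; 3(b+c+1)^2 + 2(b+c+1)(3ab+3bc+3ca+3b+3c-1),
\]
which holds after dividing by $b+c+1$ and expanding. The remaining four are checked the same way, also using that $L(a\colon\infty,0,*)$ has the same determinant as $L(a\colon 0,\infty,*)$ by the symmetry of $\mathcal{L}$ under interchanging the first two twist regions.

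I do not expect a genuine obstacle here: the lemma is purely bookkeeping once Lemmas \ref{lem:cal} and \ref{lem:diagram} and Table \ref{table:ind} are available. The only point requiring mild care is the correct reindexing $a\mapsto a-1$ in the formulas of Lemma \ref{lem:cal} when invoking Lemma \ref{lem:diagram}(2)--(3) — one must be sure the ``doubly resolved'' diagram really is the diagram of $L(a-1\colon\cdot)$ with the same $b,c$, which is exactly what Lemma \ref{lem:diagram} asserts — and, for the base case $a=1$, checking that the $L(a-1\colon\cdot)$ formulas degenerate correctly (with $a-1=0$ the relevant links become the appropriate pretzel links, consistent with the $a=1$ entries of Table \ref{table:det}). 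After that, each identity is a one-line polynomial verification.
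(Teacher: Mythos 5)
Your approach is exactly the paper's: the published proof of this lemma is literally ``these immediately follow from Lemmas \ref{lem:diagram}, \ref{lem:cal} and Table \ref{table:ind}'', and you have simply spelled out the substitutions ($a\mapsto a-1$ in the formulas of Lemma \ref{lem:cal}, plus $\det P(b+c+1,b+c+1,b+c+1)=3(b+c+1)^2$) and the resulting polynomial identities, all of which do check out. Two small corrections. First, your displayed instance of the second equation contains a transcription slip: expanding your own (correct) expression $2(b+c+1)\bigl(3(a-1)b+3bc+3c(a-1)+3(a-1)+3b+3c+2\bigr)$ gives $2(b+c+1)(3ab+3bc+3ca+3a-1)$, not $2(b+c+1)(3ab+3bc+3ca+3b+3c-1)$; with the correct factor the identity $(b+c+1)(6ab+6bc+6ca+6a+3b+3c+1)=3(b+c+1)^2+2(b+c+1)(3ab+3bc+3ca+3a-1)$ holds, whereas the line as you wrote it is false. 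Second, the value of $\det L(a\colon\infty,0,*)$, which you need for the third and fourth equations and which the paper omits from Table \ref{table:ind}, is justified by you via ``the symmetry interchanging the first two twist regions''; the evident symmetry of $\mathcal{L}$ is the order-three rotation coming from the deck transformation, so a transposition of $c_1$ and $c_2$ is not obviously a diagram symmetry. This is harmless --- the same Goeritz-matrix reduction used in Lemma \ref{lem:cal} computes $\det L(a\colon\infty,0,*)=(b+c+1)(6ab+6bc+6ca+6a+3b+3c+1)$ directly, and the third and fourth identities then verify --- but as written that step leans on an unestablished symmetry rather than a computation.
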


\begin{proof}
These immediately follow from Lemmas \ref{lem:diagram}, \ref{lem:cal} and Table \ref{table:ind}.
\end{proof}


\begin{proof}[Proof of Theorem \ref{thm:qa}]
We prove a stronger claim that not only $\mathcal{L}=L(a\colon *,*,*)$ but
$L(a\colon 0,*,*)$ is quasi-alternating.
The proof is done by induction on $a$.
By Theorem \ref{thm:qa-a1}, the claim is true when $a=1$.
Suppose $a>1$ and that the claim holds for $a-1$.

By Lemma \ref{lem:cal},
if both $L(a\colon 0,*,*)$ and $L(a\colon \infty,*,*)$ are quasi-alternating,
then $\mathcal{L}$ is quasi-alternating.

First, consider $L(a\colon 0,*,*)$.
By the resolution at the crossing $c_2$,
we obtain $L(a\colon 0,0,*)$ and $L(a\colon 0,\infty,*)$.
For the latter, perform the resolution at the crossing $c_3$ to yield
$L(a\colon 0,\infty,0)$ and $L(a\colon 0,\infty,\infty)$.
Then the claim that $L(a\colon 0,*,*)$ is quasi-alternating
follows from the facts
that $L(a\colon 0,0,*)$ and $L(a\colon 0,\infty,0)$ are alternating (Lemma \ref{lem:diagram})
and $L(a\colon 0,\infty,\infty)\ (=L(a-1\colon 0,*,*))$ is quasi-alternating by our inductive assumption,
coupled with the equations among determinants (Lemma \ref{lem:sum}).
Similarly, we can show that $L(a\colon \infty,*,*)$ is quasi-alternating.
\end{proof}

\section{Remarks}\label{sec:rem}

(1) Boyer, Gordon and Watson \cite{BGW} propose a conjecture that
an irreducible rational homology $3$-sphere is an $L$-space if and only
if its fundamental group is not left-orderable.
For $K=P(2a+1,2b+1,2c+1)$,
$\pi_1 \Sigma_2(K)$ is not left-orderable, since $\Sigma_2(K)$ is a Seifert-fibered $L$-space
(\cite{BGW}).
By Theorem \ref{thm:main}, $\Sigma_3(K)$ is also an $L$-space.
Hence it is an interesting task to show that $\pi_1 \Sigma_3(K)$ is not left-orderable.

(2)
Among genus one pretzel knots, for example,
$P(-3,5,5)$ is non-alternating.
It is known that its double branched cover is not an $L$-space (\cite{CK,Gr}).
Thus we may not expect that the $3$-fold cyclic branched cover is an $L$-space.

(3) Let $K$ be a pretzel knot $P(3,3,-n)$ with $n\ge 3$, odd.
If $n>3$, then $K$ is quasi-alternating, but $P(3,3,-3)$, which is $9_{46}$ in the knot table,
is not quasi-alternating (see \cite{CK,Gr}).
Nevertheless, $\Sigma_2(K)$ is always an $L$-space.
By a similar argument, we can show that $\Sigma_3(K)$ is an $L$-space, but
the details will be treated elsewhere.

(4) For an alternating pretzel knot $P(2a+1,2b+1,2c+1)$,
we may expect that the $d$-fold cyclic branched cover is an $L$-space for 
at least small $d\ge 4$.

\bibliographystyle{amsplain}

\end{document}